\documentclass [11pt,] {article}
\usepackage{amsmath,amssymb}
\usepackage{amsthm,amsfonts,amscd,epsfig,lineno}
\usepackage{xcolor}
\usepackage{multirow}
\usepackage{cite,epic}
\usepackage{adjustbox}
\usepackage{graphicx}
\usepackage[left=2cm]{geometry}
\usepackage{lscape}

\usepackage{amsthm}

\usepackage{makecell}
\usepackage{float}
\usepackage{booktabs}
\usepackage{amsmath, amssymb, amsthm}

\newtheorem{theorem}{Theorem}[section]
\newtheorem{lemma}[theorem]{Lemma}

\newtheorem{definition}[theorem]{Definition}

\newtheorem{construction}[theorem]{Construction}

\usepackage{booktabs}
\usepackage{multirow}
\usepackage{makecell}
\usepackage{array}
\usepackage{float}

\usepackage{caption}
\newcount\refno
\usepackage{cite}
\numberwithin{equation}{section}

\usepackage[unicode,hidelinks]{hyperref}

\title{Orthogonal Quantum Latin Squares with Maximal Cardinality
from Full-Rank Factorizations}
\author {   \footnotesize     Mingzhen Lv,   Yuyuan Zhang,   Haitao Cao\footnote{Corresponding author. \; E-mail address: caohaitao@njnu.edu.cn.  }\\
	\scriptsize    School of Mathematical Sciences, Ministry of Education Key Laboratory for NSLSCS,\\ \scriptsize Nanjing Normal University, Nanjing 210023, China. }
\date{}
\begin{document}
\parindent=0.5cm
\baselineskip=0.6cm
\maketitle
\begin{abstract}
	In this paper, we investigate the existence of pairs of orthogonal
	quantum Latin squares, each with maximal cardinality.
	Using characters and full-rank factorizations of finite abelian groups,
	together with a product construction, we establish two infinite
	families of orders for which such pairs exist.
\end{abstract}

\noindent\textbf{Keywords:} orthogonal quantum Latin square;
maximal cardinality; full-rank factorization; orthogonal orthomorphism.

\section{Introduction}
A \emph{quantum Latin square} of order $n$, denoted by QLS$(n)$,
is an $n\times n$ array of unit column vectors in $\mathbb C^n$
such that each row and each column forms an orthonormal basis of
$\mathbb C^n$.
Musto and Vicary~\cite{MV16} introduced quantum Latin squares as a
quantum-theoretic generalization of classical Latin squares.
They are connected with unitary error bases, mutually unbiased bases,
and $k$-uniform states~\cite{MV16,M17,GRMZ}.
For further research on quantum Latin squares,
see~\cite{AR,CLV,DNV,KSS,RV,NP}.

The \emph{cardinality} $c$ of a QLS$(n)$ is the number of distinct
quantum states among its entries, counted up to a global phase~\cite{PWRBZ}.
Thus $|\phi\rangle$ and $e^{i\theta}|\phi\rangle$, with $\theta\in\mathbb R$,
represent the same state for this count, and $n\leq c\leq n^2$.
A QLS$(n)$ has \emph{maximal cardinality} if $c=n^2$.
A classical Latin square of order $n$ is an $n\times n$ array on $n$
symbols in which each symbol occurs once in every row and column.
Replacing its symbols $0,\ldots,n-1$ with the computational basis
vectors $|0\rangle,\ldots,|n-1\rangle$ gives a QLS$(n)$ of cardinality
$n$. A QLS$(n)$ is called \emph{classical} if its cardinality is $n$;
otherwise, it is called \emph{non-classical}.
Zang et al.~\cite{ZZTS} proved that a QLS$(n)$ with maximal cardinality
exists for every $n\geq4$.
The possible cardinalities of quantum Latin squares have also been
studied in~\cite{ZWJ,ZJ,ZLC}.

\begin{definition}[{\cite{GRMZ,MV19}}]
Let $P=(|p_{ij}\rangle)$ and $Q=(|q_{ij}\rangle)$ be two QLS$(n)$s.
They are \emph{orthogonal} if
$
\{|p_{ij}\rangle\otimes|q_{ij}\rangle:0\leq i,j\leq n-1\}
$
forms an orthonormal basis of $\mathbb C^n\otimes\mathbb C^n$.
\end{definition}

A set of $r$ mutually orthogonal QLS$(n)$s is denoted by
$r$-$\mathrm{MOQLS}(n)$.
Two classical Latin squares are orthogonal if and only if their
corresponding quantum Latin squares are orthogonal.
A set of QLS$(n)$s is called \emph{non-classical} if at least one
of them is non-classical.

Musto and Vicary~\cite{MV19} constructed a non-classical orthogonal
pair of order $9$ and proved that any $r$-$\mathrm{MOQLS}(n)$ satisfies
$r \le n-1$.
For order $6$, Ball and Simoens~\cite{BS6} proved that no orthogonal
pair exists under the definition above. The solution of Rather
et al.~\cite{RBBRLZ} concerns the broader setting allowing entangled
entries. For further existence results and bounds on non-classical mutually
orthogonal $\mathrm{QLS}$s, see~\cite{ZFT,HZHT,HL,BSL}.

The results cited above on non-classical orthogonal families concern
sets with at least one non-classical member.
We focus on orthogonal pairs in which both
members have maximal cardinality.
In this paper, an $\mathrm{MCOQLS}(n)$ denotes a pair of orthogonal
QLS$(n)$s, each with maximal cardinality.
Our main result is the following.
\begin{theorem}\label{thm:main}
There exists an $\mathrm{MCOQLS}(n)$ in each of the following cases:
\begin{enumerate}
\item[(i)] $n=q_1q_2k$, where each of $q_1,q_2$ is either an odd
prime power at least $9$ or $16$,
and $k$ is a positive integer other than $2$ and $6$;
\item[(ii)] $n=64m$, where $m$ is any positive integer.
\end{enumerate}
\end{theorem}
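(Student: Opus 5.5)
We will build MCOQLSs from abelian groups using characters and full-rank factorizations, and then enlarge the orders with a product construction.

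\textbf{Base construction.} Let $G$ be an abelian group of order $n$ with character group $\widehat G$. Fix an orthomorphism $\theta$ of $G$, and let $A,B$ be factors in a full-rank factorization of $G$. Define
\[
|p_{ij}\rangle=\frac{1}{\sqrt n}\sum_{g\in G}\chi_i(g)\,\omega_{ij}(g)\,|g+j\rangle ,
\]
where the $\chi_i$ are distinct characters and $\omega_{ij}$ is a phase function constant on cosets determined by the factorization. Orthonormality of rows and columns should follow from character orthogonality: row $i$ gives distinct shifts of a fixed vector, and column $j$ gives distinct characters. The full-rank condition on the factorization is what makes all $n^2$ states pairwise non-proportional, i.e.\ maximal cardinality. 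The second square $Q$ is defined in the same way, with $j$ replaced by $\theta(j)-i$ or a similar shift coming from an orthomorphism orthogonal to the identity. Orthogonality of $P$ and $Q$ then reduces to the classical orthogonality of the underlying Latin squares $L(i,j)=i+j$ and $M(i,j)=i+\theta(j)$, together with a character-sum cancellation.

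\textbf{Base orders.} The base construction should give MCOQLS$(q)$ for each odd prime power $q\ge 9$ and for $q=16$, using $G=\mathbb F_q^+$ with the orthomorphism $x\mapsto ax$ for $a\neq 0,1$. It should also give MCOQLS$(64)$ from a group of order $64$ admitting a suitable full-rank factorization. The lower bound $9$ is presumably exactly what is needed for the full-rank factorization to exist; small cases like $q=3,5,7,8$ fail.

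\textbf{Product construction.} Next we prove the product step: if $(P,Q)$ is an MCOQLS$(a)$ and $(L,M)$ is a pair of orthogonal classical Latin squares of order $b$, then an MCOQLS$(ab)$ exists. The naive tensor product $P\otimes L$ has cardinality only $a^2b$. To recover $(ab)^2$ distinct states, we twist by $b$ different MCOQLS$(a)$'s indexed by the cells, or we multiply the $a$-part by cell-dependent diagonal phases, so that states in different cells of the $b$-square cannot be proportional.

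The base-level product MCOQLS$(q_1)\times$MCOQLS$(q_2)$ is likely what yields maximal cardinality robustly. The remaining factor $k$ is then handled via orthogonal Latin squares of order $k$, which exist for all $k\ne 2,6$. This gives (i). For (ii), MCOQLS$(64)$ combined with orthogonal Latin squares of order $m$ only covers $m\neq 2,6$. The cases $m=2,6$ need separate handling, for instance a direct group construction of orders $128$ and $384$, or writing $128=16\cdot 8$ and $384=64\cdot 6$ with an alternative factor.

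\textbf{Main obstacle.} The hardest step is proving that cardinality remains maximal under the product construction. Distinct states in different blocks must be shown non-proportional even when the classical inner factor repeats symbols. I would first try choosing cell-dependent characters in the inner squares, so that proportionality forces equality of all indices. The full-rank factorization is the tool that rules out accidental proportionality.
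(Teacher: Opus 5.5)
Your overall architecture matches the paper: base orders, then Construction~\ref{con:product} with orthogonal Latin squares of order $k\ne 2,6$, plus a separate fix for $m=2,6$. Your product step, which twists the inner squares by unitaries with pairwise disjoint state sets, is also essentially the paper's. The paper indexes the unitaries $U_i$ by row and takes them from Lemma~\ref{lem:unitary-copies}, so that step is not the main obstacle.

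The genuine gap is the base construction. It carries all the difficulty, and you do not actually supply it.

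\textbf{The base construction as written fails.} We have $\sum_g\chi_i(g)|g+j\rangle=\overline{\chi_i(j)}\sum_g\chi_i(g)|g\rangle$, so translates of a character vector are phase multiples of it. With trivial $\omega_{ij}$, every entry of row $i$ is the same state up to phase. Everything then rests on phases $\omega_{ij}$ that are never specified. Nothing says how the factorization enters the entries, how the second square is defined, or how the orthogonality computation goes.

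\textbf{The linear orthomorphism works against full rank.} The graph $\{(ax,x)\}$ of $x\mapsto ax$ is a subgroup of order $q$ in $\mathbb F_q^2$, so it cannot generate. In particular your claimed $\mathrm{MCOQLS}(q)$ for a single $q\ge 9$ is not established. The paper's smallest order is $64$, and the smallest possible order is left open.

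\textbf{The missing idea is Construction~\ref{con:projections}.}
\begin{itemize}
\item Take a normalized full-rank factorization $(A,B)$ of $G\times G$, not of $G$, such that $\pi_1,\pi_2$ restrict to bijections from $A$ and from $B$ onto $G$.
\item Index the two bases by $A$ and $B$, and set $|q_{xy}\rangle=n^{-1/2}\sum_{a\in A}\chi_x(\pi_1a)\chi_y(\pi_2a)|a\rangle$, and likewise over $B$.
\item Bijective projections give the Latin property.
\item The factorization turns $\langle q_{xy}|q_{x'y'}\rangle\langle r_{xy}|r_{x'y'}\rangle$ into $n^{-2}\sum_{h\in G\times G}\phi(h)$, which gives orthogonality.
\item $0\in A$ fixes the phase, and $\langle A\rangle=G\times G$ forces $\phi$ to be trivial, which gives maximal cardinality.
\end{itemize}

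\textbf{Producing such factorizations is the real work, and it explains the shape of the theorem.}

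For (i), the two fields are combined at the level of groups, $G=K\times L$ with $K=\mathbb F_{q_1}$ and $L=\mathbb F_{q_2}$, not by multiplying squares.
\begin{itemize}
\item With $E=(K\times\{0_L\})\cup(\{0_K\}\times L)$, take $A=(E\times\{0_G\})\cup(\{0_G\}\times(G\setminus E))$ and let $B$ be the graph of $\theta_G$.
\item Here $\theta$ is the non-linear orthomorphism of Lemma~\ref{lem:near-linear}: $ax$ on $M$ and $bx$ off $M$. Its graph generates $S\times S$ because $a\ne b$ and each coset of $M$ generates $(\mathbb F_q,+)$.
\item Then apply the automorphism $\Phi(x,y)=(y-x,\eta_G(y)-x)$, built from the linear orthomorphism $\eta(x)=cx$ orthogonal to $\theta$.
\item The direct-product structure is what makes $x-\theta_G(y)\in E\iff y-\theta_G^{-1}(x)\in E$ hold.
\item The condition $|K|,|L|>2$ is used to get $\langle A\rangle=G\times G$.
\item The bound $q\ge 9$ comes from the orthomorphism result in Lemma~\ref{lem:near-linear}, not from the existence of factorizations.
\end{itemize}

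For (ii), orders $64$ and $128$ come from explicit full-rank factorizations of $\mathbb Z_4^2\times\mathbb Z_2^2$ and $\mathbb Z_4^2\times\mathbb Z_2^3$. These come with nilpotent endomorphisms $T$ satisfying the conditions of Construction~\ref{con:endomorphism}.

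Your fallbacks for $m=2,6$ do not work.
\begin{itemize}
\item $384=64\cdot 6$ would need orthogonal Latin squares of order $6$, which do not exist.
\item $128=16\cdot 8$ needs an $\mathrm{MCOQLS}(16)$, which nothing in your argument provides.
\end{itemize}
The paper instead builds order $128$ directly and obtains $384=128\cdot 3$.
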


Section~\ref{sec:prelim} recalls the preliminary results and presents
a product construction.
Section~\ref{sec:character} presents constructions from full-rank
factorizations.
Section~\ref{sec:existence} establishes the required existence results
and uses the product construction to prove Theorem~\ref{thm:main}.
Section~\ref{sec:conclusion} concludes the paper and presents two open questions
on the existence of MCOQLSs.

\section{Preliminaries}\label{sec:prelim}
For a positive integer $n$, let $[n]=\{0,\ldots,n-1\}$.
For a prime power $q$, let $\mathbb F_q$ denote the field with $q$
elements.
For a group $G$, let $\mathrm{id}_G$ denote the identity map on $G$.

We recall characters, normalized full-rank factorizations of
finite abelian groups and orthogonal orthomorphisms. Then we present a
product construction for $\mathrm{MCOQLS}$s.

\begin{definition}
	Let $G$ be a finite abelian group, written additively.
	A character of $G$ is a map $\chi:G\to\mathbb C$ such that
	$|\chi(a)|=1$ and $\chi(a+b)=\chi(a)\chi(b)$ for all $a,b\in G$.
\end{definition}

The characters form a group $\widehat G$ under pointwise multiplication,
called the \emph{character group} of $G$.
Since $\widehat G$ is isomorphic to $G$, fix an isomorphism
$g\mapsto\chi_g$ from $G$ to $\widehat G$.
Then $\chi_{g+h}(a)=\chi_g(a)\chi_h(a)$,
$\chi_{-g}(a)=\overline{\chi_g(a)}$, and $\chi_0(a)=1$ for all
$g,h,a\in G$.
The character orthogonality relations give
$$
 \frac1{|G|}\sum_{a\in G}\overline{\chi_g(a)}\chi_h(a)
 =\delta_{g,h},\qquad g,h\in G.
$$
In particular, $\frac1{|G|}\sum_{a\in G}\chi_g(a)=\delta_{g,0}$.

\begin{definition}[\cite{Dinitz}]
Let $A,B$ be subsets of a finite abelian group $G$.
The pair $(A,B)$ is a factorization of $G$ if every element of $G$
has a unique representation $a+b$, where $a\in A$ and $b\in B$.
The factorization is normalized if $0\in A\cap B$, and it has
full rank if $\langle A\rangle=\langle B\rangle=G$, where
$\langle S\rangle$ denotes the subgroup generated by $S$.
\end{definition}
Every such factorization satisfies $|A||B|=|G|$.

\begin{lemma}\label{lem:automorphism-factorization}
Let $(A,B)$ be a normalized full-rank factorization of a finite abelian
group $G$, and let $\varphi\in\operatorname{Aut}(G)$.
Then $(\varphi(A),\varphi(B))$ is also a normalized full-rank
factorization of $G$.
\end{lemma}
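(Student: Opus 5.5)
The plan is to check the three defining properties one at a time, using only that $\varphi$ is a bijective homomorphism of $G$.

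\emph{Factorization.} Take $g\in G$ and apply the factorization property of $(A,B)$ to $\varphi^{-1}(g)$. This gives a unique pair $(a,b)\in A\times B$ with $\varphi^{-1}(g)=a+b$. Since $\varphi$ is additive, $g=\varphi(a)+\varphi(b)$ with $\varphi(a)\in\varphi(A)$ and $\varphi(b)\in\varphi(B)$, so a representation exists. For uniqueness, suppose $\varphi(a)+\varphi(b)=\varphi(a')+\varphi(b')$ with $a,a'\in A$ and $b,b'\in B$. Then $\varphi(a+b)=\varphi(a'+b')$. Injectivity gives $a+b=a'+b'$, and uniqueness in $(A,B)$ gives $a=a'$ and $b=b'$. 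Hence $\varphi(a)=\varphi(a')$ and $\varphi(b)=\varphi(b')$.

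\emph{Normalization.} Since $\varphi(0)=0$ and $0\in A\cap B$, we get $0\in\varphi(A)\cap\varphi(B)$.

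\emph{Full rank.} The key identity is $\langle\varphi(S)\rangle=\varphi(\langle S\rangle)$ for any subset $S\subseteq G$. To see it, note that $\varphi(\langle S\rangle)$ is a subgroup containing $\varphi(S)$, so it contains $\langle\varphi(S)\rangle$. Conversely, every element of $\langle S\rangle$ is a finite $\mathbb Z$-linear combination of elements of $S$, and $\varphi$ maps such a combination to the corresponding combination of elements of $\varphi(S)$. With the identity in hand, $\langle\varphi(A)\rangle=\varphi(G)=G$ by surjectivity, and likewise $\langle\varphi(B)\rangle=G$.

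No step is a real obstacle. The only point needing an argument beyond unwinding definitions is the subgroup identity $\langle\varphi(S)\rangle=\varphi(\langle S\rangle)$.
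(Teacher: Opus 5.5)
Your proposal is correct and follows essentially the same route as the paper: existence via $\varphi^{-1}(g)$, normalization via $\varphi(0)=0$, and full rank via $\langle\varphi(S)\rangle=\varphi(\langle S\rangle)$. The only cosmetic differences are that the paper gets uniqueness by counting ($|\varphi(A)||\varphi(B)|=|A||B|=|G|$) rather than by injectivity of $\varphi$, and that it uses the subgroup identity without the justification you supply.
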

\begin{proof}
For each $x\in G$, the element $\varphi^{-1}(x)$ has a unique
representation $a+b$ with $a\in A$ and $b\in B$.
Thus $x= \varphi(\varphi^{-1}(x)) =\varphi(a)+\varphi(b)$, where $\varphi(a)\in \varphi(A)$ and $\varphi(b)\in\varphi(B)$. Since $|\varphi(A)||\varphi(B)|=|A||B|=|G|$, the representation is unique.

Moreover,
$
\langle\varphi(A)\rangle=\varphi(\langle A\rangle)=G,
\langle\varphi(B)\rangle=\varphi(\langle B\rangle)=G,
$
and $0_G=\varphi(0_G)\in \varphi(A) \cap \varphi(B)$. Thus $(\varphi(A),\varphi(B))$ is also a normalized full-rank
factorization of $G$.
\end{proof}

\begin{definition}[{\cite{Evans}}]
An orthomorphism of a finite abelian group $G$
is a permutation $\theta:G\to G$
such that $\theta-\mathrm{id}_{G}$ is a permutation. It is \emph{normalized} if
$\theta(0)=0$. Two orthomorphisms $\theta,\eta$ are
\emph{orthogonal} if $\theta-\eta$ is a permutation.
\end{definition}

The following existence result will be used later.

\begin{lemma}[{\cite[Theorem~11]{FW}}]\label{lem:near-linear}
Let $q\geq9$ be an odd prime power and let $M$ be the subgroup of
nonzero squares in $\mathbb F_q$, or let $q=16$ and let $M$ be the
multiplicative subgroup of order $5$.
There exist distinct $a,b\in\mathbb F_q^\times$ and
$c\in\mathbb F_q\setminus\{0,1\}$ such that
$$
\theta(x)=
\begin{cases}
ax,&x\in M,\\
bx,&x\notin M,
\end{cases}
\qquad \eta(x)=cx
$$
are normalized orthogonal orthomorphisms of $(\mathbb F_q,+)$.
\end{lemma}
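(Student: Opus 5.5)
We verify the conditions directly by translating each requirement on $\theta$ and $\eta$ into a membership condition in $M$, and then choose $a,b,c$ explicitly. In both cases $M$ is a subgroup of $\mathbb F_q^\times$ with $|M|\ge 4$: for odd $q\ge 9$, $|M|=(q-1)/2\ge 4$, and for $q=16$, $|M|=5$.

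\textbf{Step 1: reduce to conditions in $M$.} The key observation is the following. Let $\alpha,\beta\in\mathbb F_q^\times$ and let $f(x)=\alpha x$ for $x\in M$ and $f(x)=\beta x$ for $x\notin M$, with $f(0)=0$. Then $f$ is a permutation of $\mathbb F_q$ if and only if $\alpha/\beta\in M$. Indeed, $f$ is a permutation exactly when the cosets $\alpha M$ and $\beta(\mathbb F_q^\times\setminus M)$ partition $\mathbb F_q^\times$. Since $\beta(\mathbb F_q^\times\setminus M)$ is the complement of $\beta M$, this happens exactly when $\alpha M=\beta M$. If $\alpha$ or $\beta$ is $0$, then $f$ is not a permutation, because $|M|\ge 2$ and $|\mathbb F_q^\times\setminus M|\ge 2$.

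Apply this to the three maps $\theta$, $\theta-\mathrm{id}$ and $\theta-\eta$, which have coefficient pairs $(a,b)$, $(a-1,b-1)$ and $(a-c,b-c)$. The linear map $\eta$ is an orthomorphism precisely when $c\notin\{0,1\}$. Hence the lemma is equivalent to finding $a\ne b$ and $c$ satisfying
$$
a,b\notin\{0,1,c\},\quad c\notin\{0,1\},\quad \frac ab\in M,\quad \frac{a-1}{b-1}\in M,\quad \frac{a-c}{b-c}\in M .
$$

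\textbf{Step 2: choose $a$ and $b$.} Take $b=a^{-1}$. Then $a/b=a^2$, and
$$
\frac{a-1}{b-1}=\frac{(a-1)a}{1-a}=-a .
$$
The conditions $a\ne b$ and $a,b\notin\{0,1\}$ amount to $a\notin\{0,1,-1\}$.
\begin{itemize}
\item For odd $q$, $a^2$ is automatically a nonzero square, so we only need $-a\in M$ with $a\ne\pm1$. At least $(q-1)/2-2\ge 2$ values of $a$ qualify.
\item For $q=16$, we have $-a=a$, so we take $a\in M\setminus\{1\}$. Then $a^2\in M$, and $a\ne a^{-1}$ because $M$ has odd order $5$.
\end{itemize}

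\textbf{Step 3: choose $c$, the main point.} With $a\ne b$ fixed, the Möbius map $c\mapsto (a-c)/(b-c)$ is a bijection from $\mathbb F_q\setminus\{b\}$ onto $\mathbb F_q\setminus\{1\}$. We need its value to lie in $M\setminus\{1\}$, which gives exactly $|M|-1$ admissible values of $c$. These values automatically satisfy $c\ne a$, since $c=a$ would give the value $0\notin M$, and $c\ne b$, since $b$ is outside the domain.

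We must still exclude $c=0$ and $c=1$. The value $c=0$ corresponds to the value $a/b$, and $c=1$ corresponds to the value $(a-1)/(b-1)$; both of these lie in $M$. Removing them leaves at least $|M|-3\ge 1$ valid choices of $c$.

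This count is the only place where the hypotheses are used: $q\ge 9$ in the odd case, or $|M|=5$ when $q=16$, guarantees $|M|\ge 4$. With any such $c$, the maps $\theta$ and $\eta$ are normalized orthogonal orthomorphisms.
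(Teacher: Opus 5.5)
Your proof is correct. The paper gives no proof of this lemma: it imports it as Theorem~11 of the reference [FW] and uses it as a black box. Your argument is therefore a genuinely different, self-contained route.

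Each step checks out:
\begin{itemize}
\item In Step~1, for $\alpha,\beta\neq0$ the map sends $M$ onto $\alpha M$ and $\mathbb F_q^\times\setminus M$ onto $\mathbb F_q^\times\setminus\beta M$. It is therefore bijective exactly when $\alpha M=\beta M$. One small wording point: for $q=16$ the set $\beta(\mathbb F_q^\times\setminus M)$ is a union of two cosets, not a coset. You only use that it is the complement of $\beta M$, so nothing is affected.
\item In Step~2, $b=a^{-1}$ gives $a/b=a^2$ and $(a-1)/(b-1)=-a$, as you claim.
\item In Step~3, $c\mapsto(a-c)/(b-c)$ is indeed a bijection from $\mathbb F_q\setminus\{b\}$ onto $\mathbb F_q\setminus\{1\}$. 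The preimages of $M\setminus\{1\}$ automatically avoid $a$ (whose image is $0$) and $b$. The values $c=0,1$ correspond to the two distinct elements $a/b$ and $(a-1)/(b-1)$ of $M\setminus\{1\}$. This leaves exactly $|M|-3\geq1$ admissible $c$.
\end{itemize}

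What your route buys is transparency and explicit parameters. It shows that the only real input is $|M|\geq4$. Since $a^2=(-a)^2$, choosing $a\in -M\setminus\{\pm1\}$ in odd characteristic (and $a\in M\setminus\{1\}$ in characteristic $2$) makes the same argument work for any subgroup $M$ of $\mathbb F_q^\times$ of order at least $4$. The two cases singled out in the statement are then not essential to this lemma itself.

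The citation buys brevity. Its cost is that the reader gets no proof of the one field-theoretic ingredient on which the family in case~(i) of the main theorem rests.
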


Before we present a product construction for obtaining such pairs of
larger orders, we recall two results needed for the construction.

\begin{lemma}[{\cite[Lemma 2.7]{ZLC}}]\label{lem:unitary-copies}
Let $P$ be a $\mathrm{QLS}(d)$, where $d\geq3$, and let $k$ be a positive integer.
There exist unitary matrices $U_i$, $i\in[k]$, such that the arrays
$U_iP$ have pairwise disjoint sets of quantum states, where $U_iP$ is obtained by applying $U_i$ to every entry of $P$.
\end{lemma}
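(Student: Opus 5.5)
The plan is to choose the unitaries one at a time, using only two facts: $P$ has finitely many states, and for fixed unit vectors $|s\rangle,|t\rangle$ the unitaries sending $|s\rangle$ to a multiple of $|t\rangle$ form a thin subset of the unitary group $U(d)$.

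First I will check that each $U_iP$ is again a $\mathrm{QLS}(d)$. A unitary maps orthonormal bases to orthonormal bases, so every row and every column of $U_iP$ is still an orthonormal basis. It therefore remains only to arrange the disjointness.

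Let $S$ be a set of representatives, up to global phase, of the states occurring in $P$. It is finite, with $|S|\le d^2$. For unit vectors $|s\rangle,|t\rangle$, the states $U|s\rangle$ and $V|t\rangle$ coincide up to phase exactly when
$$
|\langle t|V^{\dagger}U|s\rangle|=1 .
$$
I will set $U_0=I$ and build $U_1,\dots,U_{k-1}$ inductively. Suppose $U_0,\dots,U_{m-1}$ have been chosen. Then $U_m$ must avoid the finite union
$$
\mathcal B_m=\bigcup_{j<m}\ \bigcup_{s,t\in S}\{U\in U(d): |\langle t|U_j^{\dagger}U|s\rangle|=1\}.
$$
Any $U_m\notin\mathcal B_m$ makes the state sets of $U_mP$ and $U_jP$ disjoint for every $j<m$. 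After $k-1$ steps the family is pairwise disjoint.

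The key step is showing that $\mathcal B_m\neq U(d)$. Each set in the union has the form
$$
\{U: U|s\rangle\in \mathbb{C}^{\times}\,U_j|t\rangle\}=U_j\,W\,\mathrm{Stab}([s]),
$$
where $W$ is any fixed unitary with $W|s\rangle=|t\rangle$, and $\mathrm{Stab}([s])\cong U(1)\times U(d-1)$ is the stabilizer of the ray $[s]$. So each set is a left translate of a closed subgroup. Its dimension is $1+(d-1)^2$, which is strictly less than $d^2=\dim U(d)$ whenever $d\ge2$. Hence each set is a closed submanifold of positive codimension, and in particular nowhere dense. A finite union of closed nowhere dense sets is nowhere dense, so it cannot cover $U(d)$, and a suitable $U_m$ exists.

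An alternative to the dimension count is to note that each set is the zero set of the real polynomial $1-|\langle t|U_j^{\dagger}U|s\rangle|^2$ on the connected real algebraic variety $U(d)$. This polynomial is not identically zero, since we can pick a $U$ with $U|s\rangle\perp U_j|t\rangle$. The zero set is therefore a proper subvariety, and a finite union of proper subvarieties of an irreducible variety is proper. This step is the only real obstacle. The hypothesis $d\ge3$ appears to be stronger than the argument requires; $d\ge2$ already suffices.
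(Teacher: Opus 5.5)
Your argument is correct and complete. The paper itself gives no proof of this lemma: it is quoted from \cite[Lemma 2.7]{ZLC} and used only as a black box in Construction~\ref{con:product}. So there is no in-paper route to compare against, and your genericity argument serves as a self-contained proof.

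The key step is right. Each bad set is the left coset $U_jW\,\mathrm{Stab}([s])$ of a closed subgroup of dimension $1+(d-1)^2<d^2$. A finite union of such closed nowhere dense sets cannot exhaust $U(d)$.

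Your argument also gives slightly more than the statement:
\begin{enumerate}
\item[(i)] It works for every $d\ge2$. This is sharp: for $d=1$ every unitary is a scalar, so any two copies share their unique state. The hypothesis $d\ge3$ in the paper (and in Construction~\ref{con:product}) is therefore just inherited from the citation. It is harmless there, since all orders used are at least $64$.
\item[(ii)] It shows that a generic choice of $U_1,\dots,U_{k-1}$ works.
\end{enumerate}

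One small correction to your alternative algebraic argument. Euclidean connectedness of a real algebraic set does not imply irreducibility; the set $xy=0$ is a counterexample. You would need either to justify irreducibility of $U(d)$ directly, for instance via Zariski density of $U(d)$ in $\mathrm{GL}_d(\mathbb C)$, or simply to note the following: $1-|\langle t|U_j^{\dagger}U|s\rangle|^2$ is real-analytic and not identically zero on the connected manifold $U(d)$, so its zero set is nowhere dense. Since your dimension count already settles the key step, this aside is optional.
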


\begin{lemma}[{\cite[Construction 3.9]{ZLC}}]\label{lem:product-cardinality}
Let $L=(L_{ij})$ be a classical Latin square of order $k$ with entries
in $[k]$. Let $P_i=(|p^i_{ab}\rangle)$, $i\in[k]$, be QLS$(d)$s whose
union contains $c$ distinct quantum states.
The array with entries $|L_{ij}\rangle\otimes|p^i_{ab}\rangle$, indexed
by $(i,a),(j,b)\in[k]\times[d]$, is a QLS$(kd)$ of cardinality $kc$.
\end{lemma}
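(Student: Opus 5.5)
The plan is to verify the two defining properties of a quantum Latin square directly, using the tensor-product identity $(\langle x|\otimes\langle y|)(|x'\rangle\otimes|y'\rangle)=\langle x|x'\rangle\langle y|y'\rangle$. Then I will count the distinct states up to phase. Denote the array by $R$, with entry $R_{(i,a),(j,b)}=|L_{ij}\rangle\otimes|p^i_{ab}\rangle\in\mathbb C^k\otimes\mathbb C^d\cong\mathbb C^{kd}$. Each entry is a unit vector, because it is a tensor product of unit vectors.

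\emph{Rows.} Fix a row $(i,a)$ and take two positions $(j,b)$ and $(j',b')$ in it. Their inner product is $\langle L_{ij}|L_{ij'}\rangle\,\langle p^i_{ab}|p^i_{ab'}\rangle$.
\begin{itemize}
\item If $j\neq j'$, then $L_{ij}\neq L_{ij'}$, since $L$ is Latin in rows. The first factor is then $0$.
\item If $j=j'$, the second factor is $\delta_{bb'}$, because row $a$ of $P_i$ is an orthonormal basis.
\end{itemize}
So the $kd$ entries of the row are orthonormal vectors in a $kd$-dimensional space, and hence form a basis.

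\emph{Columns.} Fix a column $(j,b)$ and take two positions $(i,a)$ and $(i',a')$ in it. The inner product is $\langle L_{ij}|L_{i'j}\rangle\,\langle p^i_{ab}|p^{i'}_{a'b}\rangle$.
\begin{itemize}
\item If $i\neq i'$, the first factor vanishes, since $L$ is Latin in columns. This matters because the second factor compares entries of two different squares $P_i$ and $P_{i'}$, so it need not vanish on its own.
\item If $i=i'$, the second factor is $\delta_{aa'}$, because column $b$ of $P_i$ is orthonormal.
\end{itemize}
Hence every column is also an orthonormal basis, and $R$ is a QLS$(kd)$.

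\emph{Cardinality.} First observe that $|s\rangle\otimes|p\rangle$ and $|s'\rangle\otimes|p'\rangle$, with $s,s'\in[k]$, agree up to a global phase if and only if $s=s'$ and $|p\rangle,|p'\rangle$ agree up to a phase. To see this, apply $\langle s|\otimes I$ to both sides; this forces $s=s'$, and then $|p\rangle=e^{i\theta}|p'\rangle$.

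So the distinct states of $R$ are in bijection with pairs $(s,[p])$, where $[p]$ is a state class occurring in some $P_i$ that appears together with symbol $s$. Every such pair does occur. Given $s$ and a state $|p^i_{ab}\rangle$ of $P_i$, row $i$ of $L$ contains $s$ at some column $j$, and then the entry at $((i,a),(j,b))$ is $|s\rangle\otimes|p^i_{ab}\rangle$. Conversely, every entry has this form.

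Thus the number of distinct states is $k$ times the number of distinct states in $\bigcup_i P_i$, namely $kc$. The only delicate point is this phase-equivalence bookkeeping, and the factorization argument above settles it.
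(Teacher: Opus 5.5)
Your proof is correct and complete. The inner products factor over the tensor product, and the Latin property of $L$ kills every cross term between different squares $P_i$ and $P_{i'}$ in a column. Applying $\langle s|\otimes I$ then settles the phase-class bookkeeping, so the classes of the new array are exactly the pairs $(s,[p])$ with $s\in[k]$ and $[p]$ one of the $c$ classes in the union, which gives cardinality $kc$.

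The paper gives no proof of this lemma; it quotes it from Construction 3.9 of the cited reference. So there is no in-paper argument to compare against. Your direct verification is the natural one, and it uses the same factorization $\delta_{L_{ij},L_{i'j'}}\langle p|\cdots|p'\rangle$ that the paper itself uses in the proof of Construction~\ref{con:product}.
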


Using the preceding two lemmas, we obtain the following product construction.

\begin{construction}\label{con:product}
Let $d\geq3$ and $k\geq1$ be integers.
Suppose the following conditions are satisfied:
\begin{enumerate}
\item[(i)] There exist two orthogonal classical Latin squares of order $k$;
\item[(ii)] There exists an $\mathrm{MCOQLS}(d)$.
\end{enumerate}
Then there exists an $\mathrm{MCOQLS}(kd)$.
\end{construction}
\begin{proof}
Let $P=(|p_{ab}\rangle)$ and $Q=(|q_{ab}\rangle)$ be two orthogonal
QLS$(d)$s, each with maximal cardinality.
Let $L=(L_{ij})$ and $M=(M_{ij})$ be two orthogonal classical Latin
squares of order $k$, with entries in $[k]$.
By Lemma~\ref{lem:unitary-copies}, choose unitary matrices $U_i$, $i\in[k]$, such that the arrays $U_iP$
have pairwise disjoint sets of quantum states. Similarly, choose
$V_i$, $i\in[k]$, such that the arrays $V_iQ$ have the same property.
Define arrays $\widetilde P$ and $\widetilde Q$ by
$$
\begin{aligned}
 |\widetilde p_{(i,a),(j,b)}\rangle
 &=|L_{ij}\rangle\otimes U_i|p_{ab}\rangle,\quad
 |\widetilde q_{(i,a),(j,b)}\rangle
 &=|M_{ij}\rangle\otimes V_i|q_{ab}\rangle,
 \quad i,j\in[k], a,b\in[d].
\end{aligned}
$$
The arrays $U_iP$, $i\in[k]$, contain $kd^2$ distinct quantum
states in total, and the same holds for the arrays $V_iQ$.
By Lemma~\ref{lem:product-cardinality}, $\widetilde P$ and
$\widetilde Q$ are QLS$(kd)$s, each with maximal cardinality $(kd)^2$.

For two cells, orthogonality of $L,M$ and of $P,Q$ gives
$$
\begin{aligned}
 &\langle\widetilde p_{(i,a),(j,b)}|
          \widetilde p_{(i',a'),(j',b')}\rangle
  \langle\widetilde q_{(i,a),(j,b)}|
          \widetilde q_{(i',a'),(j',b')}\rangle\\
 &\quad=\delta_{L_{ij},L_{i'j'}}\delta_{M_{ij},M_{i'j'}}
  \langle p_{ab}|U_i^*U_{i'}|p_{a'b'}\rangle
  \langle q_{ab}|V_i^*V_{i'}|q_{a'b'}\rangle\\
 &\quad=\delta_{i,i'}\delta_{j,j'}
  \langle p_{ab}|p_{a'b'}\rangle\langle q_{ab}|q_{a'b'}\rangle\\
 &\quad=\delta_{i,i'}\delta_{j,j'}\delta_{a,a'}\delta_{b,b'}.
\end{aligned}
$$
Here $U_i^*$ and $V_i^*$ denote the conjugate transposes.
Hence $\widetilde P$ and $\widetilde Q$ are orthogonal.
\end{proof}

Bose, Shrikhande and Parker~\cite{BSP} established that two orthogonal
classical Latin squares of order $k$ exist for every $k\geq2$ other
than $2$ and $6$.
Thus Construction~\ref{con:product}, together with the trivial case
$k=1$, allows the order of such a pair to be multiplied by any positive
integer other than $2$ and $6$.

\section{Constructions from full-rank factorizations}\label{sec:character}
In this section, we first present a construction of MCOQLSs from a special class of normalized full-rank factorizations. We then give two methods for obtaining the required factorizations: one based on orthogonal orthomorphisms and the other on normalized full-rank factorizations equipped with suitable endomorphisms.

For a finite abelian group $G$, let $\pi_1,\pi_2:G\times G\to G$ be the coordinate projections defined by
$$
\pi_1(x,y)=x,\quad \pi_2(x,y)=y.
$$

\begin{construction}\label{con:projections}
Let $G$ be a finite abelian group of order $n$.
Suppose the following conditions are satisfied:
\begin{enumerate}
\item[(i)] There exists a normalized full-rank factorization $(A,B)$
of $G\times G$;
\item[(ii)] The restrictions of $\pi_1$ and $\pi_2$ to each of $A$ and
$B$ are bijections onto $G$.
\end{enumerate}
Then there exists an $\mathrm{MCOQLS}(n)$.
\end{construction}
\begin{proof}
Let $H=G\times G$. By (ii), we have $|A|=|B|=n$.
Index the computational bases of two copies of $\mathbb C^n$ by
$A$ and $B$, respectively. Define arrays $Q=(|q_{xy}\rangle)$ and
$R=(|r_{xy}\rangle)$, with $x,y\in G$, by
$$
 |q_{xy}\rangle
 =\frac1{\sqrt n}\sum_{a\in A}
   \chi_x(\pi_1a)\chi_y(\pi_2a)|a\rangle,\quad
 |r_{xy}\rangle
 =\frac1{\sqrt n}\sum_{b\in B}
   \chi_x(\pi_1b)\chi_y(\pi_2b)|b\rangle.
$$
Every entry of $Q$ is a unit vector, since
$$
 \langle q_{xy}|q_{xy}\rangle
 =\frac1n\sum_{a\in A}
   |\chi_x(\pi_1a)\chi_y(\pi_2a)|^2
 =\frac{|A|}{n}=1.
$$
The same holds for $R$.
In a fixed row or column of $Q$, the inner products of two entries are
$$
\begin{aligned}
 \langle q_{xy}|q_{xy'}\rangle
 &=\frac1n\sum_{a\in A}\chi_{y'-y}(\pi_2a)
 =\frac1n\sum_{g\in G}\chi_{y'-y}(g)=\delta_{y,y'},\\
 \langle q_{xy}|q_{x'y}\rangle
 &=\frac1n\sum_{a\in A}\chi_{x'-x}(\pi_1a)
 =\frac1n\sum_{g\in G}\chi_{x'-x}(g)=\delta_{x,x'}.
\end{aligned}
$$
Here we used the bijectivity of $\pi_1|_A$ and $\pi_2|_A$. The corresponding bijections
on $B$ give the same identities for $R$. Thus $Q$ and $R$ are QLS$(n)$s.

For two cells $(x,y)$ and $(x',y')$, define the character
$\phi:H\to\mathbb C$ by
$$
\phi(h)=\chi_{x'-x}(\pi_1h)\chi_{y'-y}(\pi_2h).
$$
The character $\phi$ is trivial if and only if $x=x'$ and $y=y'$. Indeed, if $\phi$ is trivial, then for every $g\in G$,
$
\phi(g,0)=\chi_{x'-x}(g)=1,
\phi(0,g)=\chi_{y'-y}(g)=1.
$
Hence $\chi_{x'-x}=\chi_{y'-y}=\chi_0$, and thus $x=x'$ and $y=y'$. The converse is immediate.

Suppose that $|q_{x'y'}\rangle=\lambda|q_{xy}\rangle$ for some
$|\lambda|=1$. Since $0_H\in A$, the coefficient of $|0_H\rangle$
is $1/\sqrt n$ in both vectors, so $\lambda=1$.
Comparing the remaining coefficients gives $\phi(a)=1$ for every
$a\in A$. Since $A$ generates $H$, the character $\phi$ is trivial
on $H$. Hence $x'=x$ and $y'=y$.
The same argument applies to $R$, since $0_H\in B$ and $B$ generates
$H$. Therefore, both arrays contain $n^2$ distinct quantum states.

The factorization property and character orthogonality give
$$
 \langle q_{xy}|q_{x'y'}\rangle
 \langle r_{xy}|r_{x'y'}\rangle
 =\frac1{n^2}\sum_{a\in A,\,b\in B}\phi(a+b)
 =\frac1{n^2}\sum_{h\in H}\phi(h)
 =\delta_{x,x'}\delta_{y,y'}.
$$
Thus $Q$ and $R$ are orthogonal.
\end{proof}

\begin{construction}\label{con:projection-product}
Let $K$ and $L$ be finite abelian groups with $|K|,|L|>2$. Let $n=|K||L|$.
For each $S\in\{K,L\}$, suppose that there exist normalized
orthogonal orthomorphisms $\theta_S,\eta_S:S\to S$ such that
\begin{enumerate}
\item[(i)] $\eta_S\in\operatorname{Aut}(S)$;
\item[(ii)]
$\langle\{(\theta_S(x),x):x\in S\}\rangle=S\times S$.
\end{enumerate}
Then there exists an $\mathrm{MCOQLS}(n)$.
\end{construction}

\begin{proof}
Let $\theta_K,\eta_K$ and $\theta_L,\eta_L$ be the normalized
orthogonal orthomorphisms on $K$ and $L$, respectively, satisfying
conditions~(i) and~(ii). Set $G=K\times L$ and define
$\theta_G(u,v)=(\theta_K(u),\theta_L(v))$ and
$\eta_G(u,v)=(\eta_K(u),\eta_L(v))$.
Then $\theta_G,\eta_G$ are normalized orthogonal orthomorphisms,
and $\eta_G$ is an automorphism.
We construct a normalized full-rank factorization of $G\times G$
and apply an automorphism to obtain the projection properties
required in Construction~\ref{con:projections}.

Set $E=(K\times\{0_L\})\cup(\{0_K\}\times L)$ and
$$
A=(E\times\{0_G\})\cup(\{0_G\}\times(G\setminus E)),
\qquad B=\{(\theta_G(y),y):y\in G\}.
$$
Both factors contain $(0_G,0_G)$, and $|A|=|B|=|G|$.
Since $\theta_G$ acts coordinatewise by permutations,
$x-\theta_G(y)\in E$ if and only if $y-\theta_G^{-1}(x)\in E$.
Thus every $(x,y)\in G\times G$ belongs to $A+B$, using
$$
(x,y)=
\begin{cases}
(x-\theta_G(y),0_G)+(\theta_G(y),y),&x-\theta_G(y)\in E,\\
(0_G,y-\theta_G^{-1}(x))+(x,\theta_G^{-1}(x)),&x-\theta_G(y)\notin E.
\end{cases}
$$
As $|A||B|=|G\times G|$, the representation is unique,
so $(A,B)$ is a normalized factorization.

Since $E$ generates $G$, the subgroup $\langle A\rangle$
contains $G\times\{0_G\}$.
For any $u\in K$, choose $w\in K\setminus\{0_K,u\}$
and $v\in L\setminus\{0_L\}$.
Both $(0_G,(w,v))$ and $(0_G,(w-u,v))$ belong to $A$,
and their difference is $(0_G,(u,0_L))$.
Interchanging $K$ and $L$ gives every $(0_G,(0_K,v))$.
Thus $\langle A\rangle$
contains $\{0_G\}\times G$ and $A$ generates $G\times G$.

By normalization, $B$ contains
$
((\theta_K(u),0_L),(u,0_L)),
((0_K,\theta_L(v)),(0_K,v))
$
for all $u\in K$ and $v\in L$.
By the hypotheses, these two sets generate
$(K\times\{0_L\})\times(K\times\{0_L\})$ and
$(\{0_K\}\times L)\times(\{0_K\}\times L)$, respectively.
Together they generate $G\times G$.
Hence $(A,B)$ is a normalized full-rank factorization.

	Define a homomorphism $\Phi:G\times G\to G\times G$ by
$
\Phi(x,y)=(y-x,\eta_G(y)-x).
$
Since $\eta_G$ is both an automorphism and an orthomorphism,
$\eta_G-\mathrm{id}_G$ is an automorphism.
For each $(z,w)\in G\times G$, the equation
$\Phi(x,y)=(z,w)$ has the unique solution
$y=(\eta_G-\mathrm{id}_G)^{-1}(w-z)$ and $x=y-z$.
Thus $\Phi$ is an automorphism.
By Lemma~\ref{lem:automorphism-factorization},
$(\Phi(A),\Phi(B))$ is a normalized full-rank factorization
of $G\times G$.

Since $E=-E$, we have
$$
\begin{aligned}
	\Phi(A)
	&=\{(e,e):e\in E\}
	\cup\{(y,\eta_G(y)):y\in G\setminus E\},\\
	\Phi(B)
	&=\{(y-\theta_G(y),\eta_G(y)-\theta_G(y)):y\in G\}.
\end{aligned}
$$
Since $\eta_G$ maps both $E$ and $G\setminus E$ bijectively onto themselves, both $\pi_1$ and $\pi_2$
restrict to bijections from $\Phi(A)$ onto $G$.
The same holds for $\Phi(B)$ because
$\mathrm{id}_G-\theta_G$ and $\eta_G-\theta_G$ are permutations.
Construction~\ref{con:projections} therefore gives an
$\mathrm{MCOQLS}(n)$.
\end{proof}

\begin{construction}\label{con:endomorphism}
Let $(A,B)$ be a normalized full-rank factorization of a finite abelian
group $G$ of order $n$. Suppose there exists an endomorphism $T:G\to G$
such that the following conditions are satisfied:
\begin{enumerate}
\item[(i)] $\mathrm{id}_{G}-T$ is an automorphism of $G$;
\item[(ii)] The maps $(a,b)\mapsto a+Tb$ and $(a,b)\mapsto Ta+b$
from $A\times B$ to $G$ are bijective.
\end{enumerate}
Then there exists an $\mathrm{MCOQLS}(n)$.
\end{construction}

\begin{proof}
	Since $(A,B)$ is a factorization of $G$, every $(x,y)\in G\times G$
	has a unique representation
	$(x,y)=(a,b)+(b',a'),$ with $a,a'\in A$ and $b,b'\in B$.
	Thus $(A\times B,B\times A)$ is a factorization of $G\times G$.
	Both factors contain $(0_G,0_G)$.
	Moreover, $A\times B$ contains $A\times\{0_G\}$ and
	$\{0_G\}\times B$, which together generate $G\times G$.
	The same argument applies to $B\times A$.
	Hence this factorization is normalized and has full rank.
	
	Define a homomorphism $\Phi:G\times G\to G\times G$ by
	$
	\Phi(u,v)=(u+v,u+Tv).
	$
	For each $(x,y)\in G\times G$, the equation
	$\Phi(u,v)=(x,y)$ has the unique solution
	$
	v=(\mathrm{id}_G-T)^{-1}(x-y), u=x-v.
	$
	Thus $\Phi$ is an automorphism.
	By Lemma~\ref{lem:automorphism-factorization},
	$(\Phi(A\times B),\Phi(B\times A))$ is a normalized
	full-rank factorization of $G\times G$.
	
	For $a\in A$ and $b\in B$,
	$
	\Phi(a,b)=(a+b,a+Tb),
	\Phi(b,a)=(a+b,Ta+b).
	$
	The factorization property of $(A,B)$ and condition~(ii) ensure that
	$\pi_1$ and $\pi_2$ restrict to bijections from each of
	$\Phi(A\times B)$ and $\Phi(B\times A)$ onto $G$.
	Construction~\ref{con:projections} therefore gives an
	$\mathrm{MCOQLS}(n)$.
\end{proof}

\section{Existence results}\label{sec:existence}
We establish two base existence results and then use the product
construction to prove Theorem~\ref{thm:main}.
\begin{lemma}\label{lem:field-permutations}
Let $q\geq9$ be an odd prime power, or let $q=16$.
The additive group of $\mathbb F_q$ satisfies the conditions
imposed on $S$ in Construction~\ref{con:projection-product}.
\end{lemma}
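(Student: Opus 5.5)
We take $\theta,\eta$ as supplied by Lemma~\ref{lem:near-linear}, with $S=(\mathbb F_q,+)$. That lemma already says $\theta$ and $\eta$ are normalized orthogonal orthomorphisms, and $|S|=q>2$. So two things remain: condition~(i), that $\eta\in\operatorname{Aut}(S)$, and condition~(ii), that the set $D=\{(\theta(x),x):x\in\mathbb F_q\}$ generates $\mathbb F_q\times\mathbb F_q$ additively.

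Condition~(i) is immediate. The map $\eta(x)=cx$ is additive, and it is bijective because $c\neq0$.

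For condition~(ii), let $W=\langle D\rangle$, the additive subgroup generated by $D$. Since $\theta(x)=ax$ for $x\in M$, we have $W\supseteq\{(ax,x):x\in M\}$, and likewise $W\supseteq\{(bx,x):x\notin M,\ x\ne0\}$. The plan is to show first that the additive span of $M$ is all of $\mathbb F_q$. The span of $M$ is closed under multiplication by elements of $M$, so it is a vector space over the subfield $\mathbb F_p(M)$ generated by $M$.
\begin{itemize}
\item If $q$ is odd, $M$ is the set of nonzero squares. Every element of $\mathbb F_q$ is a sum of two squares, so the span of $M$ is $\mathbb F_q$.
\item If $q=16$, $M$ is the subgroup of order $5$. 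It lies in no proper subfield, since $\mathbb F_2$ and $\mathbb F_4$ have multiplicative groups of orders $1$ and $3$. Hence $\mathbb F_2(M)=\mathbb F_{16}$, and the span of $M$, which contains $1$, is $\mathbb F_{16}$.
\end{itemize}
Consequently $W\supseteq W_1:=\{(ax,x):x\in\mathbb F_q\}$, because $x\mapsto(ax,x)$ is additive and $M$ spans. The nonsquare class $\mathbb F_q^\times\setminus M$ also spans $\mathbb F_q$: it is a coset $gM$ and $x\mapsto gx$ is additive and bijective. So in the same way $W\supseteq W_2:=\{(bx,x):x\in\mathbb F_q\}$.

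Finally, $W_1+W_2=\mathbb F_q\times\mathbb F_q$. Given $(u,v)$, we solve $ax+by=u$ and $x+y=v$ over $\mathbb F_q$. This linear system has determinant $a-b\neq0$ because $a\ne b$, so it always has a solution. Hence $W=\mathbb F_q\times\mathbb F_q$, which is condition~(ii).

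The main obstacle is justifying that $M$ and its complement each span $\mathbb F_q$ additively. For odd $q$ the sums-of-two-squares fact settles it, and it also holds at $q=9$. For $q=16$ the subfield argument does the work, and the case must be handled separately.
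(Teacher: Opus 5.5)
Your proof is correct and follows the paper's route: condition~(i) from $c\neq0$; then $M$ and a coset $tM$ each span $(\mathbb F_q,+)$, which puts $\{(ax,x)\}$ and $\{(bx,x)\}$ in the generated subgroup; then these are combined using $a\neq b$. The only variation is for odd $q$, where you use the sum-of-two-squares fact while the paper notes that a proper additive subgroup has order at most $q/p<(q-1)/2=|M|$.

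One small slip: for $q=16$, the set $\mathbb F_{16}^\times\setminus M$ is a union of two cosets of $M$, not a single coset $gM$, and ``nonsquare'' has no meaning in characteristic $2$. It does contain a coset $gM$, though, and that is all your argument needs. This is exactly how the paper handles it, by choosing $t\notin M$ and using $tM$.
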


\begin{proof}
Take $M,\theta,\eta,a,b,c$ as in Lemma~\ref{lem:near-linear}.
The maps $\theta,\eta$ are normalized orthogonal orthomorphisms.
Since $\eta(x)=cx$ with $c\ne0$, $\eta$ is an automorphism
of the additive group of $\mathbb F_q$.
It remains to prove that
$\{(\theta(x),x):x\in\mathbb F_q\}$ generates
$(\mathbb F_q,+)\times(\mathbb F_q,+)$.

We first show that $M$ generates the additive group of $\mathbb F_q$.
If $q=p^e\geq9$ is odd, where $p$ is a prime, every proper additive subgroup of
$\mathbb F_q$ has order at most $q/p<(q-1)/2=|M|$.
Hence $\langle M\rangle=(\mathbb F_q,+)$.

For $q=16$, let $\zeta$ generate the multiplicative group $M$.
Since $\zeta$ has order $5$, it lies in no proper subfield
of $\mathbb F_{16}$.
Hence $\mathbb F_2(\zeta)=\mathbb F_{16}$.
As the powers of $\zeta$ belong to $M$, their
$\mathbb F_2$-linear span is $\mathbb F_{16}$.
Thus $M$ generates the additive group of $\mathbb F_{16}$.

The same holds for every multiplicative coset $tM$,
since multiplication by $t\ne0$ is an additive automorphism.

Let $N=\left\langle\{(\theta(x),x):x\in\mathbb F_q\}\right\rangle$.
Choose $t\in\mathbb F_q^\times\setminus M$.
Since $\theta(x)=ax$ on $M$ and $\theta(x)=bx$ on $tM$,
the additive generation by these two sets gives
$$
\{(ax,x):x\in\mathbb F_q\}\subset N, \quad
\{(bx,x):x\in\mathbb F_q\}\subset N.
$$
As $a\ne b$, we have $$\{((a-b)x,0):x\in \mathbb{F}_q\}=\mathbb{F}_q \times \{0\} \subset N, \quad \{(0,(b^{-1}a-1)x):x\in \mathbb{F}_q\}= \{0\} \times \mathbb{F}_q  \subset N$$. Thus $N=\mathbb{F}_q \times \mathbb{F}_q$.
This verifies all the conditions imposed on $S$
in Construction~\ref{con:projection-product}.
\end{proof}

\begin{theorem}\label{thm:field-products}
	Let $q_1$ and $q_2$ each be either an odd prime power at least $9$
	or $16$. Then there exists an $\mathrm{MCOQLS}(q_1q_2)$.
\end{theorem}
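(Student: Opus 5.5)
The plan is to apply Construction~\ref{con:projection-product} with $K=(\mathbb F_{q_1},+)$ and $L=(\mathbb F_{q_2},+)$. Then $n=|K||L|=q_1q_2$, which is the order required in Theorem~\ref{thm:field-products}.

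First I check the size hypothesis $|K|,|L|>2$. Each $q_i$ is at least $9$, so this holds trivially.

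Next I need the structural hypotheses for $S\in\{K,L\}$. Construction~\ref{con:projection-product} asks for normalized orthogonal orthomorphisms $\theta_S,\eta_S$ with $\eta_S\in\operatorname{Aut}(S)$ and $\langle\{(\theta_S(x),x):x\in S\}\rangle=S\times S$. These are exactly the conditions that Lemma~\ref{lem:field-permutations} verifies for the additive group of $\mathbb F_q$, whenever $q$ is an odd prime power at least $9$ or $q=16$. So I apply Lemma~\ref{lem:field-permutations} separately with $q=q_1$ and with $q=q_2$. This gives $\theta_K,\eta_K$ and $\theta_L,\eta_L$.

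The two fields are handled independently, and the construction forms coordinatewise products. So no compatibility between $K$ and $L$ is needed. This remains true when $q_1=q_2$, or when the two fields have the same characteristic.

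Construction~\ref{con:projection-product} then yields an $\mathrm{MCOQLS}(q_1q_2)$. I do not expect a real obstacle, since all the substantive work has already been done in Lemma~\ref{lem:near-linear}, Lemma~\ref{lem:field-permutations} and the two constructions. The only thing to confirm is that the hypotheses are stated for each factor group separately, which they are.
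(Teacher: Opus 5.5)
Your proposal is correct and matches the paper's proof: apply Lemma~\ref{lem:field-permutations} separately to $\mathbb F_{q_1}$ and $\mathbb F_{q_2}$, then feed the resulting orthomorphism pairs into Construction~\ref{con:projection-product}. Your explicit check of $|K|,|L|>2$ and your remark that $q_1=q_2$ causes no difficulty are small points the paper leaves implicit.
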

\begin{proof}
	By Lemma~\ref{lem:field-permutations}, the additive groups of
	$\mathbb F_{q_1}$ and $\mathbb F_{q_2}$ satisfy the hypotheses of Construction~\ref{con:projection-product}.
	The result follows from the construction.
\end{proof}

We use the normalized full-rank factorization of
$\mathbb Z_4^2\times\mathbb Z_2^2$ given by
Haanp\"a\"a, \"Osterg{\aa}rd and Szab\'o~\cite[Theorem~14]{HOS}.

\begin{lemma}\label{thm:even-orders}
There exist two orthogonal QLS$(n)$s, each with maximal cardinality,
for $n=64$ and $n=128$.
\end{lemma}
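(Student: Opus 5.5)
For $n=64$ I would apply Construction~\ref{con:endomorphism} to $G=\mathbb Z_4^2\times\mathbb Z_2^2$, which has order $64$, with the normalized full-rank factorization $(A,B)$ of~\cite[Theorem~14]{HOS}. The remaining task is to exhibit an endomorphism $T$ of $G$ satisfying conditions (i) and (ii) of that construction. I would write $T$ as an integer matrix acting on $\mathbb Z_4^2\times\mathbb Z_2^2$. Its entries must be compatible with the torsion, so the block from the $\mathbb Z_2$ coordinates to the $\mathbb Z_4$ coordinates has entries in $2\mathbb Z_4$.

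Condition (i) is then a determinant-type check. The map $\mathrm{id}_G-T$ is an automorphism exactly when its reduction modulo~$2$ is invertible over $\mathbb F_2$; this reduction acts on $G/2G\cong\mathbb F_2^4$. Condition (ii) is a finite check. Since $|A||B|=|G|$, it suffices to show that the sets $\{a+Tb\}$ and $\{Ta+b\}$, for $a\in A$ and $b\in B$, each have $64$ elements. Equivalently, $T$ sends $(A,B)$ to factorizations $(A,TB)$ and $(TA,B)$. Because the search space is small, I would find $T$ by computer and then record it explicitly, together with the list of sums, so the verification can be repeated. As a first attempt I would try $T$ that preserves the subgroup structure used in~\cite{HOS}, for instance $T$ acting as an automorphism that permutes the "blocks" of $A$ or $B$. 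Such a choice makes $TB$ a translate-type rearrangement of $B$, so $(A,TB)$ is again a factorization.

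For $n=128$ the product construction cannot be used, since it would require multiplying by $2$. I would instead use $G'=G\times\mathbb Z_2$ with $A'=A\times\mathbb Z_2$ and $B'=\{(b,f(b)):b\in B\}$, where $f:B\to\mathbb Z_2$ has $f(0)=0$. Then $(A',B')$ is clearly a normalized factorization, and $A'$ generates $G'$. The map $B'$ generates $G'$ provided some additive relation $\sum b_i=0$ among elements of $B$ has an odd number of $b_i$ with $f(b_i)=1$; I would choose $f$ to ensure this.

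For $T'$ I would use the ansatz $T'(x,s)=(Tx+s g_0,\lambda(x))$, where $g_0\in G$ has order $\le 2$ and $\lambda:G\to\mathbb Z_2$ is a homomorphism. This shape is needed because a $T'$ acting as $0$ on the $\mathbb Z_2$ factor loses $s$ in $(a,s)\mapsto T'(a,s)+b'$, which would violate condition (ii). Condition (i) for $T'$ again reduces to invertibility mod~$2$. Condition (ii) becomes two explicit bijectivity checks on sets of size $128$.

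I expect the main obstacle to be finding a suitable $T$ for (ii), and then compatible $g_0,\lambda,f$ for the $128$ case. There is no structural reason forcing existence. I would therefore rely on a targeted computer search over matrices $T$ with $\mathrm{id}-T$ invertible, and then extend the successful $T$. Once data are found, Construction~\ref{con:endomorphism} yields the $\mathrm{MCOQLS}(64)$ and $\mathrm{MCOQLS}(128)$.
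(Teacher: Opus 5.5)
Your route is the paper's route. You apply Construction~\ref{con:endomorphism} to the factorization $(A_0,B_0)$ of $\mathbb Z_4^2\times\mathbb Z_2^2$. For order $128$ you take $A_1=A_0\times\mathbb Z_2$, $B_1=\{(b,f(b))\}$ and $T_1(x,s)=(T_0x+sg_0,\lambda(x))$. The paper's data fit this ansatz exactly: $f=1$ precisely on $3101$ and $3210$, $g_0=(0,2,0,0)$, and $\lambda(x)=x_2\bmod 2$.

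\textbf{The gap:} you never produce the witnesses. This lemma is an existence statement whose whole content is explicit data plus a finite check. Saying you would find $T$ (and then $f,g_0,\lambda$) by computer, while conceding that nothing forces existence, leaves it unproved. You need to exhibit the data, for instance
\[
T_0(g_1,g_2,g_3,g_4)=(2g_1,2g_2,g_3+g_4,g_3+g_4),
\]
and verify (i) and (ii). Your suggested first attempt, an automorphism permuting blocks of $A$ or $B$, also points away from the witness that works. $T_0$ satisfies $T_0^2=0$, so (i) is immediate from $(\mathrm{id}-T_0)^{-1}=\mathrm{id}+T_0$.

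For order $64$ there is in fact a structural reason that replaces the search. Let $H=\{0,2\}^2\times\{00,11\}$; then $\ker T_0=\operatorname{Im}T_0=H$. The map $g\mapsto(g_1\bmod 2,\,g_2\bmod 2,\,g_3+g_4)$ has kernel $H$ and is injective on both $A_0$ and $B_0$. So both sets are transversals of $H$, and $T_0$ maps each of them bijectively onto $H$. Suppose $a+T_0b=a'+T_0b'$. Then $a-a'\in H$, so $a=a'$; then $b-b'\in\ker T_0=H$, so $b=b'$. The map $(a,b)\mapsto T_0a+b$ is handled the same way.

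For order $128$ the paper records $T_1$ and $B_1$ and uses $T_1^3=0$ for (i). It checks $\langle B_1\rangle=G_1$ and the bijectivity of the three sum maps by direct computation. Your generation criterion for $B_1$ is met by this data: $3101=1311+2\cdot 2110-0010-2\cdot 1200$ is a relation containing exactly one element with $f=1$.
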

\begin{proof}
Take $G_0=\mathbb Z_4^2\times\mathbb Z_2^2$.
The sets
$A_0=\{a_i:i\in[8]\}$ and $B_0=\{b_i:i\in[8]\}$ listed below
form a normalized full-rank factorization of $G_0$.
Each string lists the coordinates of an element in order.
\begin{equation}\label{eq:factor64}
\begin{array}{c|cccccccc}
i&0&1&2&3&4&5&6&7\\ \hline
a_i&0000&0001&0100&0301&1000&1110&3001&3311\\
b_i&0000&0010&1200&1311&2110&2300&3101&3210
\end{array}
\end{equation}
Define an endomorphism $T_0:G_0\to G_0$ by
$$
 T_0(g_1,g_2,g_3,g_4)
 =(2g_1,2g_2,g_3+g_4,g_3+g_4).
$$
All coordinates are reduced modulo their respective group orders.
Since $T_0^2=0$, we have
$$
 (\mathrm{id}_{G_0}-T_0)(\mathrm{id}_{G_0}+T_0)=(\mathrm{id}_{G_0}+T_0)(\mathrm{id}_{G_0}-T_0)=\mathrm{id}_{G_0}-T_0^2=\mathrm{id}_{G_0}.
$$
Thus $\mathrm{id}_{G_0}-T_0$ is an automorphism with inverse $\mathrm{id}_{G_0}+T_0$.
The addition table for $A_0+T_0(B_0)$ is given below,
with rows indexed by $a_i$ and columns by $T_0(b_j)$,
in the order of~\eqref{eq:factor64}.
$$
\begin{array}{c|cccccccc}
a_i\backslash T_0(b_j)&0000&0011&2000&2200&0211&0200&2211&2011\\ \hline
0000&0000&0011&2000&2200&0211&0200&2211&2011\\
0001&0001&0010&2001&2201&0210&0201&2210&2010\\
0100&0100&0111&2100&2300&0311&0300&2311&2111\\
0301&0301&0310&2301&2101&0110&0101&2110&2310\\
1000&1000&1011&3000&3200&1211&1200&3211&3011\\
1110&1110&1101&3110&3310&1301&1310&3301&3101\\
3001&3001&3010&1001&1201&3210&3201&1210&1010\\
3311&3311&3300&1311&1111&3100&3111&1100&1300\\
\end{array}
$$
The $64$ entries are pairwise distinct and hence exhaust $G_0$.
Thus $(a,b)\mapsto a+T_0b$ is a bijection from $A_0\times B_0$
to $G_0$. Similarly, direct calculation shows that the $64$ sums
$T_0a+b$, with $(a,b)\in A_0\times B_0$, are pairwise distinct.
Therefore, $(a,b)\mapsto T_0a+b$ is also bijective,
and condition~(ii) of Construction~\ref{con:endomorphism} holds.
Thus $(A_0,B_0)$ and $T_0$ satisfy all the required conditions.

For order $128$, take $G_1=\mathbb Z_4^2\times\mathbb Z_2^3$ and set
$$
\begin{aligned}
 A_1&=\{(a_i,z):i\in[8],\ z\in\mathbb Z_2\},\\
 B_1&=\{00000,00100,12000,13110,21100,23000,31011,32101\}.
\end{aligned}
$$
Define an endomorphism $T_1:G_1\to G_1$ by
$$
 T_1(g_1,g_2,g_3,g_4,z)
 =(2g_1,2g_2+2z,g_3+g_4,g_3+g_4,g_2\bmod2).
$$
Here $0\in A_1\cap B_1$, $|A_1|=16$, and $|B_1|=8$.
Direct calculation gives $\langle A_1\rangle=\langle B_1\rangle=G_1$.
Similarly, computing $a+b$, $a+T_1b$ and $T_1a+b$
for all $128$ pairs $(a,b)\in A_1\times B_1$ gives $128$
pairwise distinct elements of $G_1$ for each of the three sum maps.
Thus all three maps are bijective.
In particular, $(A_1,B_1)$ is a normalized full-rank factorization
of $G_1$, and condition~(ii) of
Construction~\ref{con:endomorphism} holds.
Also,
$$
 T_1^2(g_1,g_2,g_3,g_4,z)=(0,2g_2,0,0,0),
 \qquad T_1^3=0.
$$
Hence $\mathrm{id}_{G_1}-T_1$ has inverse $\mathrm{id}_{G_1}+T_1+T_1^2$.
Thus $(A_1,B_1)$ and $T_1$ also satisfy all the required conditions.

Applying Construction~\ref{con:endomorphism} to the two sets of data
gives the required pairs of orders $64$ and $128$.
\end{proof}

We can now combine the preceding existence results with the product construction to obtain the main existence theorem.
\begin{proof}[Proof of Theorem~\ref{thm:main}]
	For case~(i), Theorem~\ref{thm:field-products} gives an
	$\mathrm{MCOQLS}(q_1q_2)$. The case $k=1$ follows immediately.
	For $k>1$ with $k\notin\{2,6\}$, Construction~\ref{con:product}
	gives an $\mathrm{MCOQLS}(q_1q_2k)$.
	
	For case~(ii), Lemma~\ref{thm:even-orders} gives
	$\mathrm{MCOQLS}(64)$ and $\mathrm{MCOQLS}(128)$.
	The pair of order $64$, together with Construction~\ref{con:product},
	covers every $m\notin\{2,6\}$, including the trivial case $m=1$.
	For $m=2$, use the pair of order $128$.
	For $m=6$, apply Construction~\ref{con:product} to the pair of
	order $128$ with multiplier $3$. This completes the proof.
\end{proof}

\section{Conclusion}\label{sec:conclusion}

We have constructed MCOQLSs from normalized full-rank factorizations of finite abelian groups. To obtain the factorizations required in Construction~\ref{con:projections}, we developed two approaches: one based on orthogonal orthomorphisms and the other on suitable endomorphisms. Both approaches apply more broadly than the cases considered in Section~\ref{sec:existence}. For example, let $G$ be a finite abelian group of odd order and let $(A,B)$ be a normalized full-rank factorization of $G$. Then $T=-\mathrm{id}_G$ satisfies the conditions of Construction~\ref{con:endomorphism}. Indeed, $\mathrm{id}_G-T=2\mathrm{id}_G$ is an automorphism, while the factorization property implies that the maps $(a,b)\mapsto a-b$ and $(a,b)\mapsto b-a$ are bijective. Thus, the full-rank factorizations given by Dinitz~\cite{Dinitz} provide further applications of this approach, although the corresponding orders considered here are already covered by Theorem~\ref{thm:main}.

The two families of orders in Theorem~\ref{thm:main} overlap, but neither contains the other. The bound on the number of mutually orthogonal QLSs excludes an $\mathrm{MCOQLS}(2)$.
Moreover, the results of Ball and Simoens~\cite{BS6} imply that
every orthogonal pair of order $3$, $4$ or $5$ is classical,
and that no orthogonal pair of order $6$ exists. The smallest order obtained by our constructions is $64$. This leaves two natural questions: What is the smallest order $n \ge 2$ for which an $\mathrm{MCOQLS}(n)$ exists? Does there exist an integer $n_0$ such that an $\mathrm{MCOQLS}(n)$ exists for every $n\geq n_0$?

\section*{Acknowledgements}
\noindent H. Cao's research was supported by the National Natural Science Foundation of China (Grants No. 12471313 and No. 12071226). Y.Zhang's research was supported by the Postgraduate Research \& Practice Innovation Program of Jiangsu Province (No.~26CXJH3078).

\section*{Conflicts of Interest Statement}
\noindent The authors declare no conflicts of interest.

\end{document}